\def\pref#1{{\rm (\ref{#1})}}
\def\bleu{\textcolor{blue}}
\def\Z{\mathbb{Z}}
\def\N{\mathbb{N}}
\def\R{\mathbb{R}}
\def\C{\mathbb{C}}
\def\H{\mathbb{H}}
\def\Im{\mathrm{Im}}
\def\Re{\mathrm{Re}}
\def\PSL{\mathrm{PSL}}
\newcommand\minus{%
  \setbox0=\hbox{-}%
  \vcenter{%
    \hrule width\wd0 height \the\fontdimen8\textfont3%
  }%
}
\def\define#1{\bleu{\bf #1}}
\newtheorem*{rep@theorem}{\rep@title}
\newcommand{\newreptheorem}[2]{
\newenvironment{rep#1}[1]{
\def\rep@title{#2 \ref{##1}}
\begin{rep@theorem}}
{\end{rep@theorem}}}
\newtheorem{theorem}{\bleu{Theorem}}
\newtheorem{lemma}{\bleu{Lemma}}
\newtheorem{corollary}{\bleu{Corollaire}}
\newtheorem{proposition}{\bleu{Proposition}}
\numberwithin{equation}{section}
\numberwithin{theorem}{section}
\numberwithin{lemma}{section}
\numberwithin{prop}{section}
\numberwithin{corollary}{section}
\def\segment#1#2#3#4
\def\segmentcol#1#2#3#4#5
\def\Segmentcol#1#2#3#4#5
\title{Golden Ratio and Phyllotaxis,\\
what is the mathematical link?}
\author{Fran\c{c}ois Bergeron}
\address{Fran\c{c}ois Bergeron,
D\'epartement de math\'ematiques, Universit\'e du Qu\'ebec \`a Montr\'eal}
\email{Bergeron.Francois@uqam.ca}
\author{Christophe Reutenauer}
\address{Christophe Reutenauer,
D\'epartement de math\'ematiques, Universit\'e du Qu\'ebec \`a Montr\'eal}
\email{Reutenauer.Christophe@uqam.ca}
\date{\today}
\begin{document}
\begin{abstract}
Exploiting Markoff's Theory for rational approximations of real numbers, we explicitly link how hard it is to approximate a given number to an idealized notion of growth capacity for plants which we express as a modular invariant function depending on this number. Assuming that our growth capacity is biologically relevant, this allows us to explain in a satisfying mathematical way why the golden ratio occurs in nature.
\keywords{Modular Group \and Markoff Approximation Theory \and Golden Ratio\and Phyllotaxis}
% \PACS{PACS code1 \and PACS code2 \and more}
% \subclass{MSC code1 \and MSC code2 \and more}
\end{abstract}

\maketitle
 \parskip=0pt
{ \setcounter{tocdepth}{1}\parskip=0pt\footnotesize \tableofcontents}
\parskip=8pt  
\parindent=20pt

\section{Introduction}\label{intro}

 \begin{wrapfigure}[8]{L}{0.25\textwidth}
 \small{
\begin{center}\vskip-10pt
 \includegraphics[width=40mm]{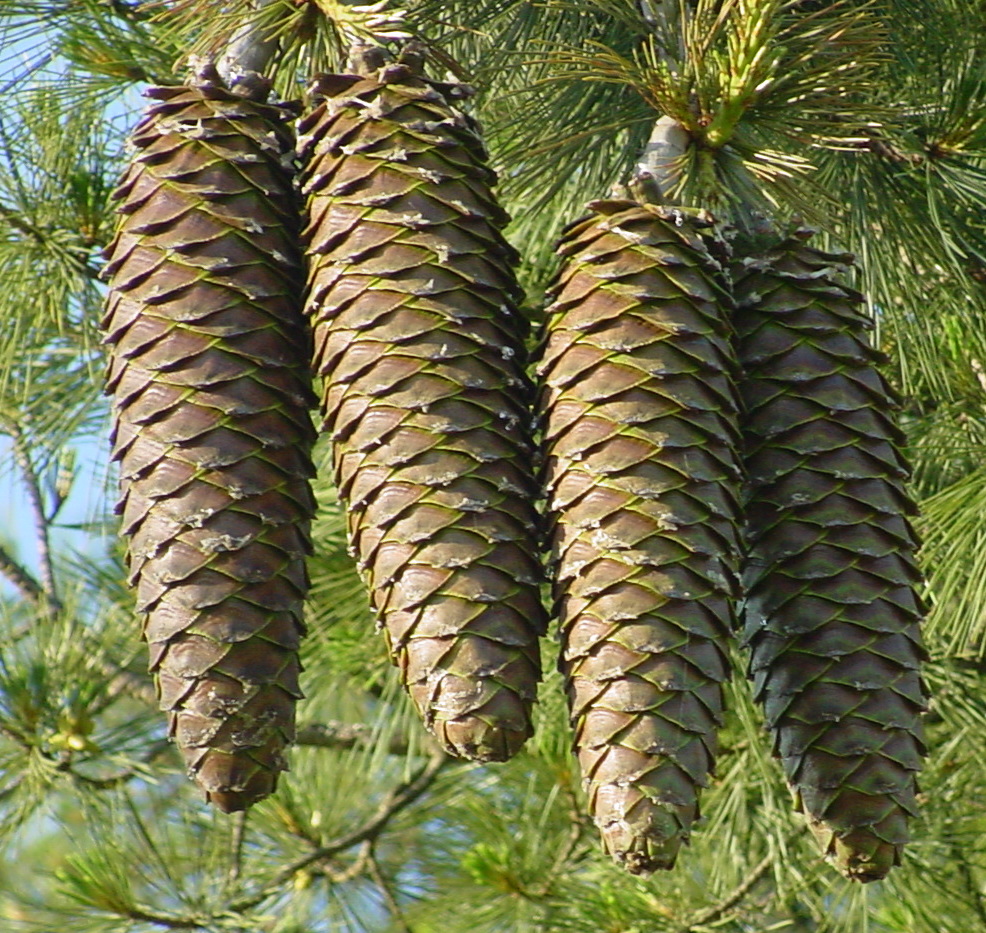} %Aloe.jpg
 \label{fig:sugarpine}
 \end{center}}
 \quad{\small Fig.~1 {Sugar pine}}
 \end{wrapfigure}
  \setcounter{figure}{1}
Among the frequently mentioned mathematical notions that occur in natural phenomena, surely Fibonacci numbers $F_n$:
\smallskip 

 \qquad   $F_0=1,\ F_1=2,\ F_2=3,\ F_3=5,\ F_5=8,\ \ldots$,
 \smallskip 

\noindent with $F_n=F_{n-1}+F_{n-2}$, and the golden ratio $\varphi=(1+\sqrt{5})/2$, rate close to the very top in the broad public media.  It is perhaps both the simplicity of their definition and their ties to beautiful patterns (such as the photo\footnote{Photo:  Richard Sniezko - US Forest Service.} in Figure~\ref{fig:sugarpine}) that make them especially appealing to a general audience. This fascination for the interplay between Fibonacci numbers and nature apparently goes back at least to Kepler, with some earlier allusions by Da Vinci.
It is also tantalizing that they are nicely related by the fact that quotient of successive Fibonacci numbers are the ``best'' rational approximations of the golden ratio
\begin{displaymath}
    \frac{1+\sqrt{5}}{2}\ \simeq\  \frac{F_{n+1}}{F_n},
\end{displaymath}
so that their joint story has both aesthetic appeal and some intellectual surprise. 
In many social occasions, mathematicians are at risk of being asked for an explanation of why the golden ratio and Fibonacci numbers should play such a nice role, often placing them in somewhat of a quandry since some part of the answer must necessarily involve an understanding of some biologico-physical law that underlies the phenomenon considered. Indeed, it stands to reason that some optimization of an advantageous trait must be behind the appearance of the patterns observed, as the cornered mathematician is bound to try to underline. If more knowledgeable about it, he/she may underline that the golden ratio is characterized by the fact that it is one of the hardest (we will explain how below) numbers to approximate by a rational number, and that this must be why it occurs in the alleged optimization involved. Albeit, this is somewhat incomplete since no explicit tie is established between a biological law and the mathematical fact referred to.

Our objective in this paper is to explain how to directly link this notion of ``hard to approximate'' to one of the abstract models of plant growth considered by some \href{https://en.wikipedia.org/wiki/Phyllotaxis}{phyllotaxis} researchers (see~\cite{iterson,Okabe2,RZ}). In fact, there is a lot of literature and interesting work (see~\cite{A}) pertaining to mathematical aspects of phyllotaxis, and a very nice broad historical overview of the plentiful and varied efforts along these lines may be found in \cite{A}. Noteworthy from our perspective are the more recent work of  \cite{AGH,D,Leigh,MK}, and the hard to approximate justification is mentioned in some outreach texts such as \cite{Du,RZ}. In \cite{AGH} is given a rigorous mathematical analysis of a model of plant pattern formation from the point of view of dynamical systems, explaining the occurrence of Fibonacci numbers in terms of fixed points and bifurcation patterns. 
Notwithstanding this, we could not find in the literature a truly satisfying direct mathematical link between the hard to approximate property of the golden ration and some abstract mode of growth of plants, with a precise mathematical formulation of the nature of this direct tie.  This work does propose such a formulation, but we make no claim that our model has been validated from the point of view of Biology. We leave this to be checked by the experts in the field.

\begin{wrapfigure}[9]{L}{0.26\textwidth}
\begin{center}\vskip-5pt
 \includegraphics[width=35mm]{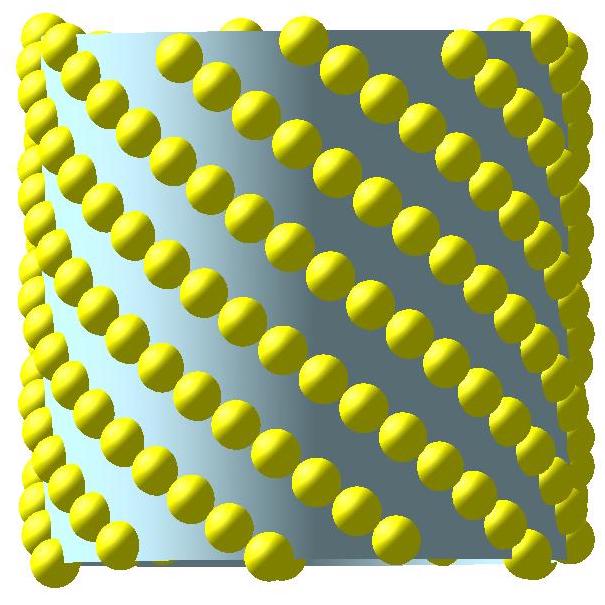}
 \vskip-10pt
\caption{\small{Cylindric plant}} \label{fig:cylindric}
 \end{center}
 \end{wrapfigure}
   \setcounter{figure}{2}
   We start by recalling how the notion of hard to approximate by a rational number has been beautifully developed by Markoff\footnote{This is the same Markov as in the well-known Markov chains theory; who used this surname spelling in his French publications.} in two seminal papers~\cite{markoff1,markoff2} that appeared in 1879 and 1880.  His theory is nicely presented in a recent book of Aigner \cite{Ai}, where more details may be found. Following Markoff's tack, we associate to each irrational number $x$ its \href{https://en.wikipedia.org/wiki/Lagrange_number}{\define{Lagrange number}}, denoted $L=L(x)$. This is the largest (supremum of the set of) real number such that there are infinitely many rational approximations $p/q$ of $x$ for which we have the inequality
    \begin{displaymath}\left| x-\frac{p}{q}\right| < \frac{1}{L\,q^2}.\end{displaymath}
Part of Markoff's Theory says that $L(x)=\sqrt{5}$, if $x$ is equivalent\footnote{Here, a number is considered to be \define{equivalent} to the golden ratio if its continued fraction expansion only contains $1$ after a certain rank.} to the golden ratio; and that $L(x)\geq\sqrt{8}$ for any other real number. In other words, any number $x$, not equivalent to the golden ratio, affords infinitely many rational approximations for which
    \begin{displaymath}\left| x-\frac{p}{q}\right| < \frac{1}{\sqrt{8}\,q^2},\end{displaymath}
 whereas this is not so for the golden ratio. 
 It is in this precise sense that  the golden ratio (and its equivalents) is considered hardest to approximate. Markoff's Theory goes on to give a very nice filtration of real numbers with respect to how easier they become to approximate, once some relevant subsets are removed. He shows that there is a sequence of Lagrange numbers $L_n$, generalizing  $\sqrt{5}$ and $\sqrt{8}$ above, of the form 
     \begin{displaymath}L_n=\sqrt{9-\frac{4}{m_n^2}},\end{displaymath}
 with the $m_n$'s integers that are now called Markoff (or Markov) numbers. The first Markoff numbers are
   \begin{displaymath}1, 2, 5, 13, 29, 34, 89, 169, 194, 233, 433, 610, 985, 1325,\ldots \end{displaymath}
 To each Lagrange number ($<3$), there corresponds a finite number of explicit families of numbers (all having the same continued fraction expansion after some rank, for a given family) to be excluded, so that all other numbers satisfy the inequality
    \begin{displaymath}\left| x-\frac{p}{q}\right| < \frac{1}{L_n\,q^2}.\end{displaymath}  
 For more on this, see~\cite{Reutenauer}.

In trying to understand how to tie the hard to approximate property of the golden ratio to plant growth, we consider the following model. The ``plant'' is considered to be cylindrical, with buds growing successively on an upward helix at regular intervals (see Figure~\ref{fig:cylindric} and~\ref{reseau_cercles}). The length of these intervals is measured by the \define{divergence} $x$ in terms of the ``angle'' between two successive buds. This is expressed as a proportion of a complete turn (expressed in radians),  with the actual angle equal to $2x\pi$. It is stated in~\cite{Du,RZ}  that for best plant growth, $x$ must be not only irrational but in fact an irrational that is hardest as possible to approximate. 
Our purpose here is to exploit Markoff Theory to justify this last statement making use of a model suggested by Iterson~\cite[page 24]{iterson} that suggests what one could consider as an optimization parameter in plant growth. More explicitly, we consider a specific function $f(x,y)$ that measures how ``good'' a growth scheme is given by its divergence $x$, with $y$ denoting the height difference between successive buds. We show that $f(x,y)$ is ``globally optimal'' (that is for all $y$) if and only if $x$ is equivalent to the golden ratio.  From a mathematical perspective, the function $f(x,y)$ is both sound and with elegant properties. Noteworthy among these is the fact that it is invariant under the Modular Group, when considered as a function of the complex number $x+iy$. In fact this plays a key role in the proof of our main result. 

Further interesting mathematical work related to phyllotaxis may be found in the work of Adler \cite{A}, Atela, Gol\'e and Hotton \cite{AGH}, Coxeter \cite{C}, Leigh \cite{L}, Marzec and Kappraff \cite{MK}, Okabe \cite{Okabe1,Okabe2}; as well as in the papers collected in 
{\em Symmetry in Plants} \cite{JB}.

\section{A mathematical model based on the area around a bud}
As sketched above, we consider a spiral \define{growth scheme} on the cylinder to be specified by the pair of numbers $(x,y)$, with $x$ the divergence angle between successive buds, and $y$ the height difference between these buds, as illustrated in Figure~\ref{reseau_cercles}. 
To introduce a measure of how good a growth scheme $(x,y)$ is, Iterson suggested that one should surround each bud by the largest-area disk (pictured as spheres in Figure~\ref{reseau_cercles}, only for aesthetic reasons) so that no two disks overlap. Thus the diameter of these circles is the shortest possible distance between two buds. Heuristically put, one considers here that an optimal growth scheme for a plant would be to aim at sprouting the maximal number of buds with a minimal use of resources (here measured by disk-covering-area). Hence, for a given growth scheme, the proportion of area of the trunk covered by the aforementioned disks is considered to measure how capacious the growth scheme is.

\begin{figure}[ht]
\begin{center}
\includegraphics[scale=.7]{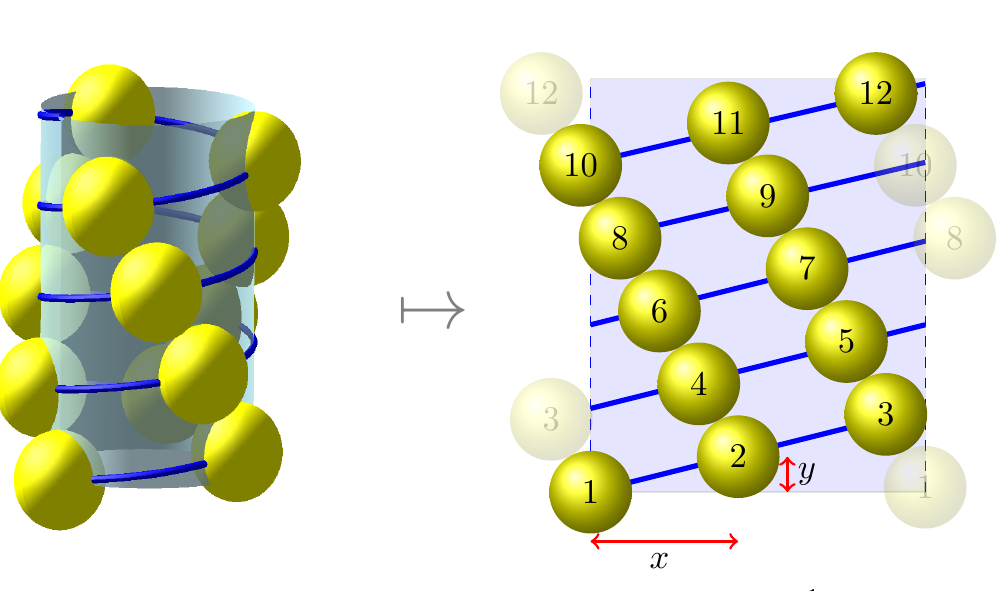}
\end{center}\vskip-10pt
\caption{Buds on a cylindrical trunc, and unfolded version.}  
\label{reseau_cercles}
\end{figure}

Unfolding the cylinder (and periodically repeating horizontally the pattern of buds) we get a lattice $\mathcal{L}_{xy}$ in the plane which is ``generated'' by the vectors $(1,0)$ (implicitly assuming that the circumference of the cylinder is equal to $1$), and $(x,y)$. More explicitly, we have
 \begin{displaymath}\mathcal{L}_{xy}:=\{\alpha\,(1,0)+\beta (x,y)\ |\ \alpha,\beta\in \Z\},\end{displaymath}
with buds placed at each points of $\mathcal{L}_{xy}$.  
\begin{wrapfigure}[7]{R}{0.35\textwidth}
\vskip25pt
\begin{picture}(30,30)(-30,7)
  \begin{tikzpicture}[scale=2.5]
  \draw[line width=1pt,color=blue,fill=blue,opacity=0.1] (0,0)-- (0,.835) -- (1.1,1.1) -- (1.1,.265)   -- cycle;
 \draw[line width=.6pt,color=blue] (0,0)-- (0,.835) -- (1.1,1.1) -- (1.1,.265)   -- cycle;
   \draw[line width=0pt,fill=yellow] (.55,.55) circle (.41);
   \draw (.55,.96) -- (.55,.14);
         \node at (.65,.65) {$d$};
   \draw[->,red,line width=1.5pt]  (0,-0.01) -- (0,.835) ;
      \node[red] at (-.1,.55) {$u$};
   \draw[->,red,line width=1.5pt]  (0,0) -- (1.1,.265);
      \node[red] at (.77,.1) {$v$};
\end{tikzpicture}
\end{picture}
\vskip10pt
\quad{Fig. 4 Disk inscribed in the fundamental region.}
\end{wrapfigure}
 \setcounter{figure}{4}Following Iterson, as mentioned above, we surround each point of $\mathcal{L}_{xy}$ by a disk whose diameter $d=d(x,y)$ is the smallest distance between two points of the lattice. 
%Since $\mathcal{L}_{xy}$ is a discrete subgroup of $\R\times\R$, it is well-known that it affords a basis of minimal length vectors, say $u=(a,b)$ and $v=(c,d)$, with $\| u\| \leq \|v\|$. In particular, we must have $d(x,y)=\|u\|$. 
The parallelogram with sides $u$ and $v$ (for any basis $u$, $v$ of $\mathcal{L}_{xy}$) is said to be a \define{fundamental region} for the lattice, and $\R\times \R$ is tiled by $\mathcal{L}_{xy}$ translates of this fundamental region. The area of said region is given by the absolute value of the determinant
whose row are the vectors $u$ and $v$. It is easy to see that this is equal to $y$. Indeed, this area does not depend on the choice of basis, hence we may choose the basis $\{(1,0),(x,y)\}$, and calculate the area as being
     $$\det\begin{pmatrix} 1 & 0\\ x& y \end{pmatrix}=y.$$
Up to a translation we may assume that the disks originally surrounding each point of $\mathcal{L}_{xy}$ are drawn with center in the middle of each of the translates of the fundamental region, as illustrated on the right. Thus the measure how well the disks cover the plane corresponds to the ratio of area of one of the disks (of radius $d(x,y)/2$) with respect to the area of one copy of the fundamental region, in formula this gives
${\pi\,d(x,y)^2}/(4\,y)$. 

Simplifying by a scalar multiple, we define the measure of ``capacity'' of a growth scheme as the quotient $d(x,y)^2/y$, considering as above that this capacity is directly correlated to the proportion of area covered by disks.  For a fixed divergence, we will study the behavior of the function $y\mapsto d^2/y$ and show, using Markoff Theory, that the minimum of this function is largest when $x$ is the golden ratio or an equivalent number. 

\section{Growth capacity is invariant under the modular group}

Let us first straightforwardly reformulate our construction above in terms of Poincar\'e's half-plane model of hyperbolic geometry, and its completion:
   \begin{displaymath}\H:=\{\omega \in \C\ ; \  \Im(\omega)>0\}, \qquad{\rm and}\qquad
      \overline{\H}:=\H \cup \R\cup \{\infty\}.\end{displaymath}
Each point $(x,y)$ (with $y>0$) is considered here as the point $\omega:=x+iy$ in $\H$. In this manner, we will consider points of $\H$  as encoding growth schemes. To each such growth scheme $\omega\in\H$, we associate the lattice $\mathcal{L}_\omega:=\mathbb Z + \mathbb Z\omega$. This is the additive subgroup of $\C$ generated by $1$ and $\omega$; and $d(\omega)$ is the minimal distance between two points of this lattice. Just as in our previous formulation, we have
    \begin{displaymath}d(\omega)=\mbox{min}\{\ |\alpha+\beta\,\omega|\ ;\  \alpha,\beta\in \mathbb Z, (\alpha,\beta)\neq (0,0)\}.\end{displaymath} 
Following our discussion of the previous section, we reformulate the \define{growth capacity} function  $f: \H\to \R$ as
\begin{equation}
   f(\omega):=\frac{d(\omega)^2}{\Im(\omega)}.
\end{equation}
It may very well be that this function has already been considered, together with Proposition~\ref{Prop:Invariance} below, but we could not find its trace in the  literature. 

We first recall basic facts about the action of the modular group $\PSL_2(\mathbb Z)$ on $\overline{\H}$. Elements of $\PSL_2(\mathbb Z)$ are $2\times 2$ matrices of determinant $1$ with coefficients in $\Z$, with $g$ identified with $-g$. The action $\PSL_2(\mathbb Z)\times \overline{\H}\rightarrow \overline{\H}$  is defined as
   \begin{displaymath}g\cdot \omega=\frac{a\,\omega+b}{c\,\omega+d},\qquad {\rm for}\qquad g=\begin{pmatrix} a&b\\c&d \end{pmatrix}\in \PSL_2(\mathbb Z),
       \end{displaymath}
with $g\cdot \infty:=a/c$ and $g\cdot (-d/c) =\infty$, when $c\not=0$; and $g\cdot \infty:=\infty$ otherwise. 
As is well-known, the modular group is generated\footnote{See for instance \cite{S}, Theorem 2 of chapiter VII.} by the two functions $T:\omega \mapsto \omega +1$, and $S:\omega\mapsto {-1}/{\omega}$, with relations 
   \begin{displaymath}S^2=\mathrm{Id},\qquad {\rm and}\qquad 
      (ST)^3=\mathrm{Id}. \end{displaymath}
\begin{figure}[ht]
\begin{center}
\includegraphics[scale=.7]{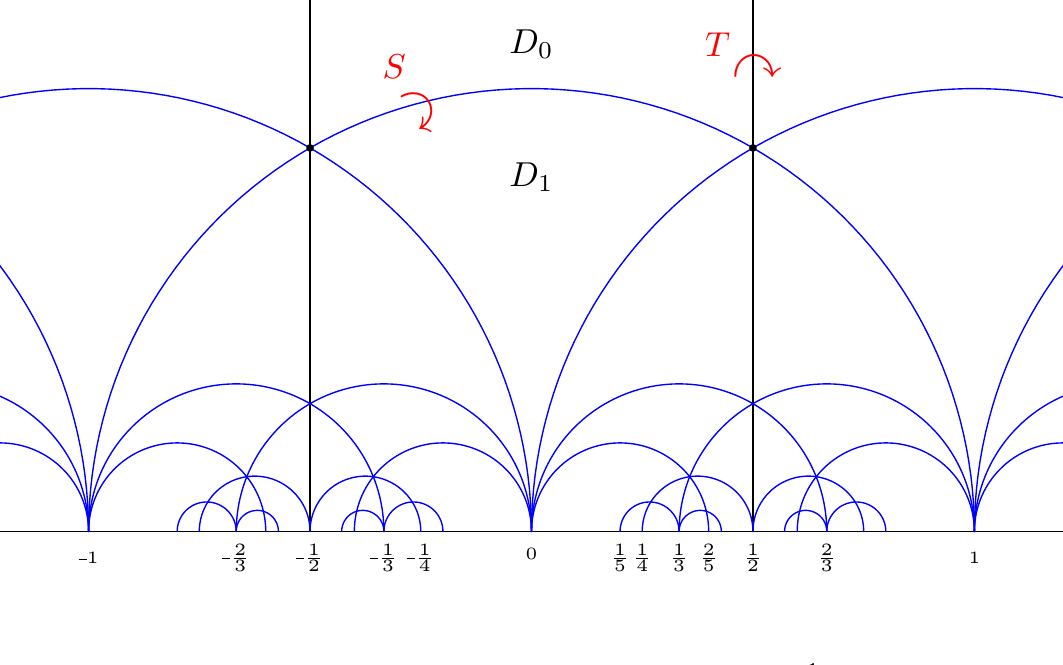}
\vskip-15pt
\caption{Tiling of hyperbolic plane.}  \label{pavagehyper}
\end{center}
\end{figure}
A very classical decomposition of the space $\H$, with respect to this action of the modular group, is obtained by considering all images under group elements of the fundamental region
    \begin{displaymath}D_0=\{\omega \in \C\ ;\  -{1}/{2}\leq\mbox{Re}(\omega)\leq{1}/{2},\quad {\rm and}\quad  |\omega|\geq 1\}.\end{displaymath}
 This results is a tiling of $\H$, partly shown in Figure~\ref{pavagehyper}, with $D_1$ being the image of $D_0$ under $S$ (which sends $\infty$ to $0$).

\begin{proposition}\label{Prop:Invariance}
The function $f$ is invariant under the modular group $\PSL_2(\mathbb Z)$, that is $f(g\cdot\omega)=f(\omega)$ for all $g\in \mbox{SL}_2(\mathbb Z)$ and $\omega\in\H$.
\end{proposition}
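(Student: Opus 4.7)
The plan is to verify invariance using the well-known presentation of $\PSL_2(\Z)$ by the generators $T: \omega \mapsto \omega+1$ and $S: \omega \mapsto -1/\omega$. It is enough to check that $f(T\omega)=f(\omega)$ and $f(S\omega)=f(\omega)$, because any $g\in\PSL_2(\Z)$ factors as a word in $S$ and $T$. The key observation I would isolate first is a transformation rule for the lattice $\mathcal{L}_\omega = \Z+\Z\omega$ and for its minimal distance $d(\omega)$ under the two generators, together with the classical identity for the imaginary part.

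For $T$, the invariance is immediate: $\Im(\omega+1)=\Im(\omega)$ and $\mathcal{L}_{\omega+1}=\Z+\Z(\omega+1)=\Z+\Z\omega=\mathcal{L}_\omega$, so $d(T\omega)=d(\omega)$ and the quotient is unchanged. For $S$, I would observe that
\begin{displaymath}
\mathcal{L}_{-1/\omega} = \Z + \Z\bigl(\minus\tfrac{1}{\omega}\bigr) = \tfrac{1}{\omega}\bigl(\Z\omega - \Z\bigr) = \tfrac{1}{\omega}\,\mathcal{L}_\omega,
\end{displaymath}
so every nonzero element of $\mathcal{L}_{S\omega}$ is $1/\omega$ times a nonzero element of $\mathcal{L}_\omega$. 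Taking minima of the moduli gives $d(S\omega)=d(\omega)/|\omega|$, hence $d(S\omega)^2 = d(\omega)^2/|\omega|^2$. Combined with the standard identity $\Im(-1/\omega)=\Im(\omega)/|\omega|^2$, the two factors of $|\omega|^2$ cancel and I get $f(S\omega)=f(\omega)$.

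If one prefers a single-stroke argument avoiding the generator reduction, I would instead work with an arbitrary $g=\begin{pmatrix}a&b\\c&d\end{pmatrix}\in\PSL_2(\Z)$ and use that $\{a\omega+b,\,c\omega+d\}$ is another $\Z$-basis of $\mathcal{L}_\omega$ precisely because $ad-bc=1$; this yields $\mathcal{L}_{g\omega}=\frac{1}{c\omega+d}\,\mathcal{L}_\omega$, whence $d(g\omega)=d(\omega)/|c\omega+d|$, which combines with the classical formula $\Im(g\omega)=\Im(\omega)/|c\omega+d|^2$ to give $f(g\omega)=f(\omega)$ at once.

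The only step that requires any care is justifying the rescaling $\mathcal{L}_{S\omega}=\frac{1}{\omega}\mathcal{L}_\omega$ (or its $g$-version), and in particular that multiplication by the nonzero complex number $1/(c\omega+d)$ is a bijection of $\mathcal{L}_\omega$ onto $\mathcal{L}_{g\omega}$ that sends nonzero vectors to nonzero vectors, so that the minimum of $|\cdot|$ transforms predictably. Once that is in hand, no nontrivial obstacle remains: the result is essentially a bookkeeping of how the two factors $d^2$ and $\Im$ each scale by $|c\omega+d|^{-2}$.
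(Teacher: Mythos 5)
Your proof is correct and follows essentially the same route as the paper: reduce to the generators $T$ and $S$, note that $T$ preserves both the lattice and the imaginary part, and for $S$ use the rescaling $\mathcal{L}_{-1/\omega}=\frac{1}{\omega}\mathcal{L}_\omega$ together with $\Im(-1/\omega)=\Im(\omega)/|\omega|^2$ so the factors of $|\omega|^2$ cancel. Your alternative single-stroke argument for general $g$, via the change of basis $\{a\omega+b,\,c\omega+d\}$ and the identity $\Im(g\cdot\omega)=\Im(\omega)/|c\omega+d|^2$, is also valid and slightly more direct, but it is the same underlying mechanism.
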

\begin{proof}
It is clearly sufficient to show that $f$ is invariant for $T$ and $S$. It is evident in the first case, since the lattice generated by $1$ and $\omega$ coincides with the lattice generated by  $1$ and $\omega+1$ on one hand; and on the other because the imaginary parts of $\omega$ and $\omega+1$ are equal.
The second case proceeds as follows. Observe first that elements of the lattice $\mathcal{L}(-1/\omega)$ may be written as multiples of $1/\omega$ by elements of $\mathcal{L}(\omega)$:
    \begin{displaymath}\alpha+\beta\left(\frac{-1}{\omega}\right)=\frac{1}{\omega}(\alpha\,\omega-\beta).\end{displaymath}
Hence, the module of  $\alpha+\beta\left({-1}/{\omega}\right)$ is equal to that of $\alpha\omega-\beta$ (which lies in $\mathcal{L}(\omega)$)  divided by $|\omega|$. Since this links all elements of $\mathcal{L}(-1/\omega)$ to a corresponding element of $\mathcal{L}(\omega)$, it follows that $d({-1}/{\omega})=d(\omega)/{|\omega|}$. On the other hand, 
  \begin{displaymath}\Im\left(\frac{-1}{\omega}\right)=\Im\left(\frac{-\bar{\omega}}{\omega\bar\omega}\right)=\frac{\Im(\omega)}{|\omega|^2}.\end{displaymath} 
Thus 
   \begin{displaymath}f\left(\frac{-1}{\omega}\right)=\frac{d(\omega)^2/|\omega|^2}{\Im(\omega)/|\omega|^2}=f(\omega),\end{displaymath}
which concludes the proof.
\end{proof}

\begin{proposition}\label{prop:valfond}
If $\omega$ lies in $D_0$ or any of its horizontal translates $D_0+n=T^n(D_0)$, for $n\in \mathbb Z$, then 
$f(\omega)={1}/{\Im(\omega)}$.
\end{proposition}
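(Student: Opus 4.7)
The plan is to reduce to the case $\omega\in D_0$ and to show that in this region we always have $d(\omega)=1$, from which $f(\omega)=1/\Im(\omega)$ follows immediately from the definition.

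Reduction to $D_0$: for any $n\in\mathbb Z$ the lattice $\mathcal L_{\omega+n}=\mathbb Z+\mathbb Z(\omega+n)$ coincides with $\mathcal L_\omega=\mathbb Z+\mathbb Z\omega$, since the second generator differs from the old one by an integer multiple of $1$. Hence $d(\omega+n)=d(\omega)$, and of course $\Im(\omega+n)=\Im(\omega)$, so that $f$ is trivially $T$-invariant (this is also part of Proposition~\ref{Prop:Invariance}). Consequently, it is enough to establish $d(\omega)=1$ whenever $\omega\in D_0$.

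For the upper bound, the choice $(\alpha,\beta)=(1,0)$ gives $|\alpha+\beta\omega|=1$, so $d(\omega)\leq 1$. For the matching lower bound, take any $(\alpha,\beta)\in\mathbb Z^2\setminus\{(0,0)\}$ and expand
\begin{displaymath}
|\alpha+\beta\omega|^2=\alpha^2+2\alpha\beta\,\Re(\omega)+\beta^2|\omega|^2.
\end{displaymath}
The two defining inequalities of $D_0$, namely $|\Re(\omega)|\leq 1/2$ and $|\omega|\geq 1$, give $2\alpha\beta\,\Re(\omega)\geq -|\alpha\beta|$ and $\beta^2|\omega|^2\geq\beta^2$, so that
\begin{displaymath}
|\alpha+\beta\omega|^2\ \geq\ \alpha^2-|\alpha\beta|+\beta^2.
\end{displaymath}

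It remains to observe that $\alpha^2-|\alpha\beta|+\beta^2\geq 1$ for every nonzero $(\alpha,\beta)\in\mathbb Z^2$: if one of $\alpha,\beta$ vanishes the other is a nonzero integer, giving at least $1$; otherwise $|\alpha|,|\beta|\geq 1$ and the elementary inequality $\alpha^2+\beta^2\geq 2|\alpha\beta|$ yields $\alpha^2-|\alpha\beta|+\beta^2\geq |\alpha\beta|\geq 1$. Combining, $d(\omega)\geq 1$, hence $d(\omega)=1$ and $f(\omega)=1/\Im(\omega)$ on $D_0$, and by the reduction on every translate $D_0+n$ as well. I do not foresee a genuine obstacle here: the only point where one has to be a little careful is to absorb the cross term $2\alpha\beta\,\Re(\omega)$ via the bound $|\Re(\omega)|\leq 1/2$, but this is precisely what the shape of $D_0$ is designed to allow.
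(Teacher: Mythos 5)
Your proof is correct and follows essentially the same route as the paper: reduce to $D_0$ by translation invariance, expand $|\alpha+\beta\omega|^2$, and use $|\Re(\omega)|\leq 1/2$ and $|\omega|\geq 1$ to bound it below by $\alpha^2-|\alpha\beta|+\beta^2$. The only cosmetic difference is that you justify $\alpha^2-|\alpha\beta|+\beta^2\geq 1$ by an elementary case analysis with $\alpha^2+\beta^2\geq 2|\alpha\beta|$, whereas the paper invokes the discriminant $-3$ of the form; you also make explicit the attainment of the value $1$ at $(\alpha,\beta)=(1,0)$, which the paper leaves implicit.
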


\begin{proof}
For $\omega=x+iy\in D_0$, elements of the lattice $\Z+\Z\omega$  are of the form $\alpha+\beta\omega=\alpha+\beta \,x+i\beta\,y$, and
\begin{eqnarray*}
    |\alpha+\beta \omega|^2&=&(\alpha+\beta\,x)^2+(\beta\,y)^2\\
         &=&\alpha^2+2x\,\alpha \beta +(x^2+y^2)\beta^2.
   \end{eqnarray*}
Note that $2\,|x|\leq 1$, so that $(-2)\,|x|\geq -1$, and we get 
\begin{eqnarray*}
   \alpha^2+2x\,\alpha \beta +(x^2+y^2)\beta^2&\geq& \alpha^2+(x^2+y^2)\beta^2-2|x\,\alpha \beta |\\
      &\geq& \alpha^2-|\alpha\beta |+\beta^2\\
      &=& |\alpha|^2-|\alpha||\beta |+|\beta |^2.
  \end{eqnarray*}
For $\alpha$ and $\beta$ in $\Z$, the quadratic form $\alpha^2-\alpha \beta+\beta^2$ only takes positive integral values, since its discriminant is $-3$. Its minimum value, for $\alpha,\beta $ not both $0$, is thus $1$. It follows that the minimum value of $|\alpha+\beta \omega|^2$, under the same conditions for $\alpha,\beta$, is also $1$. Thus we have shown that $d(\omega)^2=1$, and we get the announced formula for $f(\omega)$ in this case. When $\omega\in n+D_0$, the result also holds since both $f$ and the imaginary part of $\omega$ are invariant under horizontal translations. This completes our proof.
\end{proof}
The previous result implies that $f$ is bounded above by $2/\sqrt{3}$, since this is the maximal value of $f$ in the fundamental domain $D_0$.

\begin{corollary}
The function $f$ is continuous.
\end{corollary}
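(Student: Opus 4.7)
The plan is to show $d(\omega)$ is continuous on $\H$; continuity of $f=d^2/\Im$ then follows, since $\Im(\omega)>0$ is itself continuous and nonzero. The route I would take is to argue that, locally, $d(\omega)$ is the minimum of only \emph{finitely many} of the functions $\omega\mapsto|\alpha+\beta\omega|$, so that the standard fact ``a minimum of finitely many continuous functions is continuous'' applies.

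Fix $\omega_0\in\H$ and choose a compact neighbourhood $U\subset\H$ of $\omega_0$ on which the imaginary part is bounded below by some $y_\ast>0$. The key uniform estimate is that for $\omega=x+iy\in U$,
\begin{displaymath}
|\alpha+\beta\omega|^2=(\alpha+\beta x)^2+\beta^2y^2\geq \beta^2 y_\ast^2,
\end{displaymath}
so $|\alpha+\beta\omega|$ blows up uniformly on $U$ as $|\beta|\to\infty$. For the remaining case $\beta=0$, one has $|\alpha+\beta\omega|=|\alpha|$, which also blows up as $|\alpha|\to\infty$. Consequently, if we fix any $M>d(\omega_0)$, only finitely many pairs $(\alpha,\beta)\in\Z^2\setminus\{(0,0)\}$ can satisfy $|\alpha+\beta\omega|\leq M$ for some $\omega\in U$; call this finite set $F$.

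Shrinking $U$ if necessary (using that $d(\omega_0)<M$ and that the finitely many continuous functions $|\alpha+\beta\omega|$ for $(\alpha,\beta)\in F$ are uniformly close to their values at $\omega_0$), one ensures both that $d(\omega)<M$ throughout $U$ and that every pair realizing the minimum belongs to $F$. Then
\begin{displaymath}
d(\omega)=\min_{(\alpha,\beta)\in F}|\alpha+\beta\omega|\qquad\text{for all }\omega\in U,
\end{displaymath}
which exhibits $d$ as the minimum of finitely many continuous functions on $U$. Hence $d$ is continuous at $\omega_0$, and $\omega_0$ being arbitrary, $d$ is continuous on $\H$, giving the corollary.

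There is little genuine obstacle here; the only subtlety is the uniform bound on $|\alpha+\beta\omega|$ ensuring local finiteness of the minimization. As a sanity check, one can cross-verify continuity using Propositions~\ref{Prop:Invariance} and~\ref{prop:valfond}: on each tile $g\cdot D_0$, modular invariance gives $f(\omega)=f(g^{-1}\cdot\omega)=1/\Im(g^{-1}\cdot\omega)$, a continuous expression in $\omega$; invariance also forces these expressions to agree on shared boundaries, so pasting yields continuity on all of $\H$. This alternative derivation has the side benefit of exhibiting $f$ as a piecewise-smooth function on the modular tiling.
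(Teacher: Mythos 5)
Your argument is correct, and it takes a genuinely different route from the paper. You prove continuity of $d$ directly, by showing that on a compact neighbourhood $U$ with $\Im(\omega)\geq y_\ast>0$ the estimate $|\alpha+\beta\omega|^2\geq \beta^2 y_\ast^2$ (together with the boundedness of $\Re(\omega)$ on $U$ and the trivial $\beta=0$ case) confines the minimization to a finite set of pairs, so that $d$ is locally a minimum of finitely many continuous functions; continuity of $f=d^2/\Im$ follows at once. The paper instead never touches $d$ directly: it uses the modular invariance of Proposition~\ref{Prop:Invariance} together with Proposition~\ref{prop:valfond} to write $f(\omega)=1/\Im(g^{-1}\cdot\omega)$ on each tile $g\cdot D_0$, and then glues these continuous expressions across the tiling, invoking the facts that the tiles cover $\H$ and that at most three of them meet at any point. (Your closing ``sanity check'' paragraph is essentially the paper's proof.) Each approach has its merits: yours is elementary and self-contained, does not depend on the covering and local-finiteness properties of the modular tiling (for which the paper must cite Serre), and yields the stronger conclusion that $d$ itself is continuous --- indeed locally a minimum of finitely many smooth functions, hence locally Lipschitz. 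The paper's route is shorter given the machinery already in place, and it produces as a by-product the explicit formula $f(\omega)=1/\Im(g^{-1}\cdot\omega)$ on each tile, which is exactly what is needed for the computation of $f(x+i/t)$ in the next section.
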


\begin{proof}
Clearly the restriction of $f$ to $D_0$ is continuous. For $g$ in the modular group, the restriction of $f$ to $g\cdot D_0$ is also continuous, 
since this restriction maps $\omega \in g\cdot D_0$ to
\begin{equation}\label{eq:formuleinvariant}
    f(\omega)=f(g^{-1}\omega)=\frac{1}{\Im(g^{-1}\omega)}
 \end{equation}
in view of the invariance of $f$ under the modular group, and by Proposition~\ref{prop:valfond}, knowing that $g^{-1}\cdot\omega\in D_0$.  But $g^{-1}$ is continuous, hence $f$ is continuous on $gD_0$.
We know that $\H$ is the union of the $gD_0$, for $g$ running over  $\PSL_2(\mathbb Z)$ (see \cite{S} Theorem 1 du chapitre VII). Moreover, at most three of these images contain any given point (see Figure \ref{pavagehyper}). It follows that $f$ is continuous at these finitely covered points, and $f$ is continuous everywhere. Thus showing the overall assertion.
\end{proof}

%%%%%%%%%%%%%
\section{Geometrical interpretation of growth capacity}
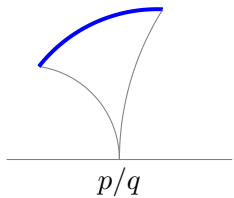
\begin{wrapfigure}[12]{L}{0.27\textwidth}
 \begin{tikzpicture}[scale=3]
 \segmentcol{2.083}{0.4167}{0}{1.427}{gray}
 \Segmentcol{2.667}{0.6667}{1.532}{2.475}{blue}
 \segmentcol{3.750}{1.250}{2.580}{3.142}{gray}
  \draw[color=gray] (2,0) -- (3,0);
\node at (2.5,-.1) {\small ${p}/{q}$};
 \end{tikzpicture}
 \caption{Cusp of triangle.}\label{fig:tiangle}
 \end{wrapfigure}
 Let us now consider how $f$ behaves for $\omega=x+iy\in\H$, with $x$ fixed. Proposition~\ref{prop:valfond}  takes care of all cases when $y>1$ (at least), and the interesting behavior is thus when $y$ becomes smaller and smaller. To better see this, we consider   $y={1}/{t}$, hence the function that sends $t$ to $f(x+{i}/{t})$. Figure~\ref{graphe}, illustrates how this function behaves for some fixed $x$. Once again we consider the tiling of $\H$ made out of the regions $g\cdot D_0$. Each of these is an \define{hyperbolic triangle}, with exactly one of its vertices in $\overline{\R}=\R \cup\{\infty\}$ (the regions $n+D_0$ are those for which this vertex is at $\infty$). This special vertex is said to be the \define{cusp} of the triangle and, except for the cases $n+D_0$,  it is located at some rational number $p/q$. The \define{basis} of the triangle is the edge opposite to the cusp.  See Fig.~6 above for an illustration of such a triangle and its cusp, with edge basis colored in blue (as is also the case in upcoming figures).
 
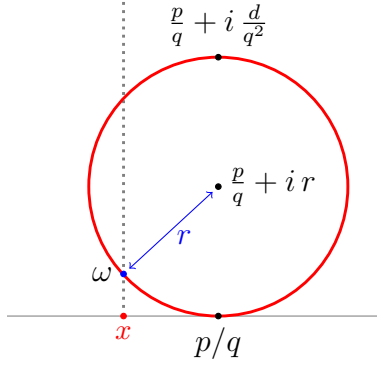
\begin{figure}[ht]
\begin{center}
\begin{tikzpicture}[scale=.7]
\clip(1,-1) rectangle (8,6);
\draw[color=gray] (0,0) -- (10,0);
\draw[color=gray,dotted,line width=1.1pt] (3.2,0) -- (3.2,8);
\draw[color=red,line width=1.1pt] (5,2.46) circle (70pt);
\node[anchor=west] at (5,2.46) {$\frac{p}{q}+i\,r$}; 
\draw[fill=black]  (5,2.46) circle (1.5pt);
\draw[fill=black]  (5,4.92) circle (1.5pt);
\node[anchor=south] at (5,4.92) {$\frac{p}{q}+i\,\frac{d}{q^2}$}; 
\draw[<->,shorten <=3pt,shorten >=3pt,color=blue]  (5,2.46) -- (3.2,.8);
\draw[color=blue,fill=blue] (3.2,.8) circle (1.5pt);
\node[anchor=east] at (3.2,.8) {$\omega$};
\node[anchor=north] at (5,0) {{${p}/{q}$}};
\node[color=blue,anchor=east] at (4.7,1.5) {{$r$}};
\draw[fill=black]  (5,0) circle (1.5pt);
\draw[color=red,fill=red] (3.2,0) circle (1.5pt);
\node[color=red,anchor=north] at (3.2,0) {{\small $x$}};
\end{tikzpicture}
\end{center}\vskip-10pt
\caption{Interpretation of $f(\omega)$ as a radius (up to a scalar multiple).}\label{fig:rayon}
\end{figure}
 Exploiting the propositions of the previous section, we may give an elegant geometrical interpretation of the function $f(\omega)$. 
 Indeed, it follows from Prop.~\ref{prop:valfond} that $f(\omega)$ is constant along an horizontal line $\Im(\omega)=1/d$, for $d\leq 1$, since the line is then entirely contained in the translates $D_0+n$ for $n\in\Z$. By general principles of inversive geometry, the image of this line under the modular group transformation 
    \begin{displaymath}\omega \mapsto \frac{p\,\omega+ p'}{q\,\omega +q'},\qquad {\rm for}\qquad g=\begin{pmatrix} p&p'\\q&q'\end{pmatrix}\in \PSL_2(\mathbb Z),\end{displaymath}
 is a circle tangent to the real axis at $p/q=g\cdot\infty$. Its radius is equal to $r=d/(2\,q^2)$, and hence its center is $p/q+i\,r$. 
 Indeed,  we have
    \begin{displaymath}g\cdot(x+i/d) = {\frac { \left( px+p' \right)  \left( qx+q' \right) {d}^{2}+pq}{ \left( 
qx+q' \right) ^{2}{d}^{2}+{q}^{2}}}+i\,{\frac {\,d}{ \left( qx+q' \right) ^{
2}{d}^{2}+{q}^{2}}}
\end{displaymath}
which evaluates to $p/q+i\,d/q^2$ at $x=-q'/q$. Since this is the point diametrically opposed to $p/q$, perforce the diameter of the circle is its $y$-coordinate, hence our formula. 

On the other hand, from Proposition~\ref{prop:valfond} we deduce that 
\begin{equation}\label{eq:formule_pour_f}
     f(x+i/t)=(x\,q-p)^2\,t+{q^2}/{t}.
  \end{equation}
by applying~\pref{eq:formuleinvariant} to $\omega=x+i/t$ in $g\cdot D_0$, using $pq'-qp'=1$, via the calculation
\begin{eqnarray*}
     f(x+i/t) &=& \frac{1}{\Im(g^{-1}(\omega))}\\
                &=& \Im\left(\frac{q'\,\omega-p'}{-q\,\omega+p} \right)^{-1}\\
                &=& \Im\left(\frac{(q'\,\omega-p')(-q\,\overline{\omega}+p)}{(-q\,\omega+p)(-q\,\overline{\omega}+p)} \right)^{-1}\\
                &=&\Im\left(\frac{(-q'\,q\,x^2+x+(p\,p'-q\,q'/t^2)+i/t}{(x\,q-p)^2+{q^2}/{t^2}}\right)^{-1}\\
                &=& (q\,x-p)^2\,t+{q^2}/{t},
  \end{eqnarray*}
as announced. As it happens, this last right-hand side affords the following simple geometrical interpretation.

\begin{proposition}\label{prop:rayon}
For any $\omega\in\H$, let $g\in \PSL_2(\mathbb Z)$ be such that $g^{-1}\cdot\omega$ lies in some $D_0+n$ (for $n\in\Z$), and let $p/q:=g\cdot\infty$. Then, the value of the function $f$ at $\omega$ is equal to $d\,q^2$, with $d$ being the diameter of the circle which is tangent to the real axis at $p/q$, and which passes through the point $\omega$.
\end{proposition}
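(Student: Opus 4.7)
The plan is to combine the tiling of $\H$ by the fundamental domain with the Möbius interpretation of $g$ worked out in the paragraph preceding \pref{eq:formule_pour_f}, so that the geometrical picture and the algebraic formula for $f$ feed into each other.

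First, I would pick $g\in\PSL_2(\Z)$ and $n\in\Z$ such that $\omega_0:=g^{-1}\cdot\omega$ lies in $D_0+n$; this is possible because the translates $g\cdot D_0$ tile $\H$. By Proposition~\ref{Prop:Invariance} together with Proposition~\ref{prop:valfond}, this already gives
\begin{displaymath}
f(\omega)\ =\ f(\omega_0)\ =\ \frac{1}{\Im(\omega_0)}.
\end{displaymath}
So the whole task reduces to identifying $\Im(\omega_0)$ with $d\,q^2$, where $d$ is the diameter in the statement.

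Next, I would use the fact that the horizontal line $\mathcal{H}_0:=\{z\in\H \mid \Im(z)=\Im(\omega_0)\}$ passes through $\omega_0$ and meets $\partial\overline{\H}$ only at $\infty$. Since $g$ is a Möbius transformation sending $\infty$ to $p/q$, the image $g\cdot\mathcal{H}_0$ is a circle in $\H$ tangent to the real axis precisely at $p/q$, and it passes through $g\cdot\omega_0=\omega$. This is the circle described in the proposition, so I just need to compute its diameter.

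For the diameter, I would reuse the calculation displayed in the excerpt: evaluating $g$ at the point $-q'/q+i\,\Im(\omega_0)$ of $\mathcal{H}_0$ (the pre-image of the point of $g\cdot\mathcal{H}_0$ whose real part equals $p/q$), one obtains, after a short simplification using $pq'-qp'=1$,
\begin{displaymath}
g\cdot\Bigl(-\tfrac{q'}{q}+i\,\Im(\omega_0)\Bigr)\ =\ \frac{p}{q}+i\,\frac{1}{q^{2}\,\Im(\omega_0)}.
\end{displaymath}
Since this point is diametrically opposite to the tangent point $p/q$, the diameter of $g\cdot\mathcal{H}_0$ is $d=1/\bigl(q^{2}\,\Im(\omega_0)\bigr)$, whence $d\,q^{2}=1/\Im(\omega_0)=f(\omega)$. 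The only mildly delicate step is verifying the claim about the image of $\mathcal{H}_0$ and locating the antipodal point of $p/q$ on it, but this is standard inversive geometry and the computation is essentially already performed in the excerpt leading to~\pref{eq:formule_pour_f}; once it is in place, the whole argument is a one-line consequence of invariance plus Proposition~\ref{prop:valfond}.
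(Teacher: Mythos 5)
Your proposal is correct, and it reaches the conclusion by a slightly different route than the paper. The paper's own proof is a three-line Cartesian computation: it writes the equation $(x-p/q)^2+(y-r)^2-r^2=0$ of the circle through $\omega=x+iy$, multiplies by $q^2/y$ to obtain $(q\,x-p)^2/y+q^2y=d\,q^2$, and then recognizes the left-hand side (with $y=1/t$) as the explicit formula $f(x+i/t)=(q\,x-p)^2t+q^2/t$ of~\pref{eq:formule_pour_f}. You instead bypass that explicit formula entirely: from $f(\omega)=1/\Im(g^{-1}\cdot\omega)$ you identify the circle in the statement as the $g$-image of the horizontal line through $\omega_0=g^{-1}\cdot\omega$ (unique circle tangent to $\R$ at $p/q$ through $\omega$), and read off its diameter by computing the antipode of $p/q$, namely $g\cdot(-q'/q+i\,\Im(\omega_0))=p/q+i/(q^2\Im(\omega_0))$ --- a computation the paper itself performs in the paragraph preceding the proposition, with the roles of $d$ and $1/d$ swapped. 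Your version has the advantage of making visible \emph{why} the circle is the natural object (it is the level set of $f$ through $\omega$, being the image of a horizontal line on which $f$ is constant by Proposition~\ref{prop:valfond}), and of not depending on the longer derivation of~\pref{eq:formule_pour_f}; the paper's version has the advantage of tying the geometric statement directly to the explicit formula that is used in the rest of the argument. The only points to make explicit in your write-up are that $q\neq 0$ (so that $g\cdot\infty=p/q$ is a finite rational and the image of the horizontal line is indeed a circle rather than another horizontal line) and the uniqueness of the circle tangent to $\R$ at $p/q$ passing through $\omega$; both are immediate.
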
  

\begin{proof}
In terms of real coordinates, the equation of the circle considered (represented in Figure~\ref{fig:rayon}) is  $(x-p/q)^2+(y-r)^2-r^2=0$.
Multiplying both sides by $q^2/y$, this may be written as
  $(q\,x-p)^2/y+q^2y-d\,q^2=0$, with $d=2\,r$.
Thus, with $y=1/t$, we get
    \begin{displaymath}d\,q^2=(q\,x-p)^2t+q^2/t=f(x+i/t),\end{displaymath}
 thus showing our assertion.
\end{proof}

\begin{figure}[ht]
\begin{center}
\includegraphics[width=110mm]{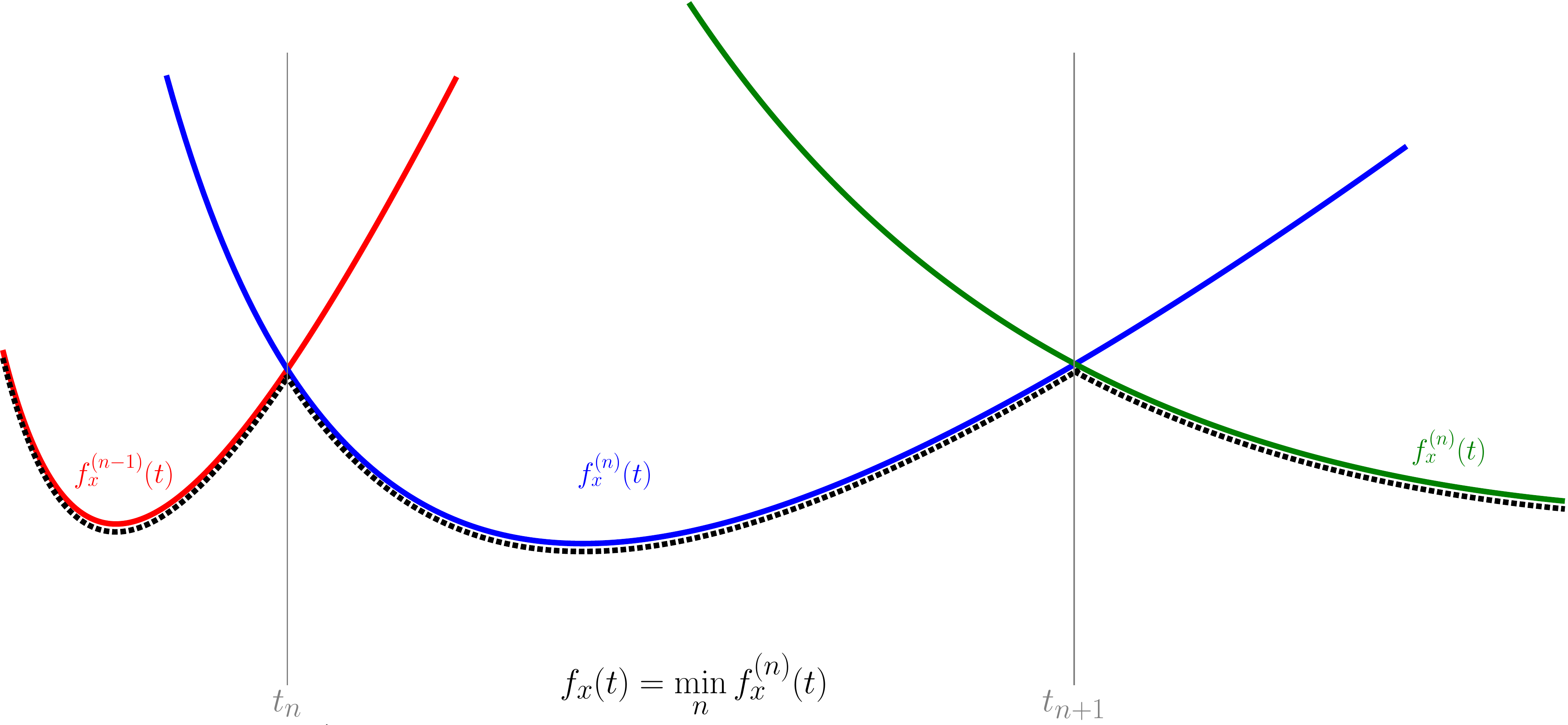}
\end{center}
\caption{The function $f_x$ is obtained by gluing pieces of successive functions $f_x^{(n)}(t)$.}  
\label{fig:minima}  
\end{figure}
The following proposition will help us tie our growth capacity measure to how well or not the number $x$ may be approximated by a rational number. To this end, we first clarify the domain on which formula~\pref{eq:formule_pour_f} applies, for a fixed value of $x$. Since the right-hand side of \pref{eq:formule_pour_f} is a smooth convex function of $t$, and $f$ is globaly continuous, it results that 
     \begin{displaymath}f_x:=t\mapsto f(x+i/t)\end{displaymath}
is a piecewise smooth convex function between some local maxima, where it is not derivable. More precisely, we have an increasing sequence of real numbers $t_n=t_n(x)$
      \begin{displaymath}t_1<t_2<\ \ldots\ <t_{n-1} <t_n<\ \ldots\end{displaymath}
 such that the function $f_x$ is (locally) given by the formula 
\begin{equation}\label{eq:formule_fn}
    f_x^{(n)}(t):=(x\,q_n-p_n)^2\,t+{q_n^2}/{t}.
 \end{equation}
This is to say that $f_x(t)=f_x^{(n)}(t)$, when $t_n\leq t\leq t_{n+1}$. Observe that $f_x^{(n)}(t)$ makes sense for all $t>0$, and Figure~\ref{fig:minima} illustrates how $f_x$ is obtained by gluing pieces of successive functions $f_x^{(n)}(t)$, for increasing values of $n$. 
We will see later that $p_n/q_n$ is the $n^{\rm th}$ Hermite convergent of $x$. This will imply that 
   \begin{displaymath} f_x^{(n-1)}(t)< f_x^{(n)}(t),\qquad {\rm if}\qquad t<t_n,\end{displaymath}
 and 
   \begin{displaymath} f_x^{(n-1)}(t)> f_x^{(n)}(t),\qquad {\rm if}\qquad t>t_n;\end{displaymath} 
 hence $t_n$ is a local maximum of $f_x$. We may thus write
     \begin{displaymath}f_x(t)=\min_n f_x^{(n)}(t),\end{displaymath}
with  the minimum taken over $n$, for any fixed $t$.
Continuity of $f$ forces $f_x^{(n)}$ to agree with $f_x^{(n-1)}$ at $t_n=t_n(x)$, hence
   \begin{displaymath}(q_n\,x-p_n)^2\,t_n+{q_n^2}/{t_n}=
   (q_{n-1}\,x-p_{n-1})^2\,t_n+{q_{n-1}^2}/{t_n}.\end{displaymath}
Solving this equality for $t_n$ gives \begin{equation}\label{eq:formule_maxima}
   t_n:=\sqrt{\frac{  q_{n}^{2} -q_{n-1}^{2}}{ \left( q_{n-1}\,x-p_{n-1} \right) ^{2}- \left( x\,q_{n}-p_{n} \right) ^{2}}},
\end{equation}
and we may then calculate directly that
   \begin{displaymath}f_x(t_n)=t_n\,\frac{p_n/q_n+p_{n-1}/q_{n-1}-2\,x}{q_{n}/q_{n-1} -q_{n-1}/q_n}.\end{displaymath}

\begin{proposition}\label{prop:formule}
The local minima of the function $f_x$, from $\R^*$ to $\R$, are the numbers $2\,|q_n(q_n\,x-p_n)|$,
 and these are achieved at $t_0=|q_n/(q_n\,x-p_n)|$.
\end{proposition}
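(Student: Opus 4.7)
The plan is to observe that $f_x$ coincides piecewise with the smooth convex functions $f_x^{(n)}$, and to locate the minima of each $f_x^{(n)}$ by elementary calculus, checking afterwards that these critical points lie inside the intervals on which the gluing is valid.

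First I would recall, from the discussion preceding the proposition, that
\begin{displaymath}
 f_x(t)=f_x^{(n)}(t)=(q_n\,x-p_n)^2\,t+\frac{q_n^2}{t}\quad\text{for } t_n\le t\le t_{n+1},
\end{displaymath}
and that the junction points $t_n$ are local \emph{maxima} of $f_x$. Since $f_x^{(n)}$ is smooth and strictly convex on $(0,\infty)$ (it is a sum of a linear and a reciprocal term, both positive once $q_nx-p_n\neq 0$), any local minimum of $f_x$ must lie in the interior of some interval $(t_n,t_{n+1})$ and be a critical point of the corresponding $f_x^{(n)}$.

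Next I would compute this critical point. Differentiating,
\begin{displaymath}
 \frac{d}{dt}f_x^{(n)}(t)=(q_n\,x-p_n)^2-\frac{q_n^2}{t^2},
\end{displaymath}
which vanishes precisely at $t_0=|q_n/(q_n\,x-p_n)|$. Substituting back gives
\begin{displaymath}
 f_x^{(n)}(t_0)=(q_n\,x-p_n)^2\,\frac{|q_n|}{|q_n\,x-p_n|}+q_n^2\,\frac{|q_n\,x-p_n|}{|q_n|}=2\,|q_n(q_n\,x-p_n)|,
\end{displaymath}
which is exactly the AM-GM lower bound $2\sqrt{(q_nx-p_n)^2\cdot q_n^2}$ for the sum $At+B/t$. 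This yields the announced formulas for the value and the location of the local minimum.

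Finally I would confirm that $t_0\in(t_n,t_{n+1})$, so that this critical point of $f_x^{(n)}$ is genuinely a local minimum of $f_x$ and not an extremum lying outside the active piece. Since $t_n$ and $t_{n+1}$ are both local maxima of $f_x$ by the preceding discussion, the convex function $f_x^{(n)}$ cannot be monotone on $[t_n,t_{n+1}]$; its unique minimum on $(0,\infty)$ must therefore fall strictly inside the interval. This is the only delicate point of the argument, but it is forced by the already-established behaviour at the gluing points, so no new estimate is needed. Conversely, every such interior critical point is a local minimum of $f_x$, and the proposition follows.
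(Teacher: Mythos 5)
Your proof is correct and follows essentially the same route as the paper's: differentiate the convex piece $f_x^{(n)}$, locate its critical point at $t_0=|q_n/(q_n\,x-p_n)|$, and evaluate to obtain $2\,|q_n(q_n\,x-p_n)|$. The only difference is that you also check that $t_0$ falls strictly inside $(t_n,t_{n+1})$ by invoking the fact that the gluing points are local maxima; the paper's proof omits this verification, so your version is if anything slightly more complete.
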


\begin{proof} Assume that $f_x$ is given by formula~\pref{eq:formule_fn} in the segment $t_n\leq t\leq t_{n+1}$, and observing that this is a convex function, therefore the minimum occurs when
   \begin{displaymath}\frac{d}{dt} f_x^{(n)} = -{q_n^2}/{t^2}+(q_n\,x-p_n)^2=0,\end{displaymath}
hence when $t$ is equal to $t_0:=\left| q_n/(q_n\,x-p_n)\right|$, and the corresponding value
   \begin{displaymath}f_x^{(n)}(t_0) = 2\,|q_n(q_n\,x-p_n)|\end{displaymath}
is the announced minimum. 
\end{proof}
Geometrically, this minimum occurs when the circle of Proposition~\ref{prop:rayon} is tangent to the vertical line whose points have real part equal to $x$.  

%%%%%%%%%%%%%%%%%%%
\section{Global behavior of growth capacity}

We will now see that the global behavior of $f_x$ may be revealed using interesting properties of  Hermite's {\em approximation theory} \cite{H} for real numbers. We will exploit this to understand what singles out the golden ratio as a champion from the point of view of the associated growth capacity function.
\begin{figure}[ht]
\begin{center}
\includegraphics[scale=.7]{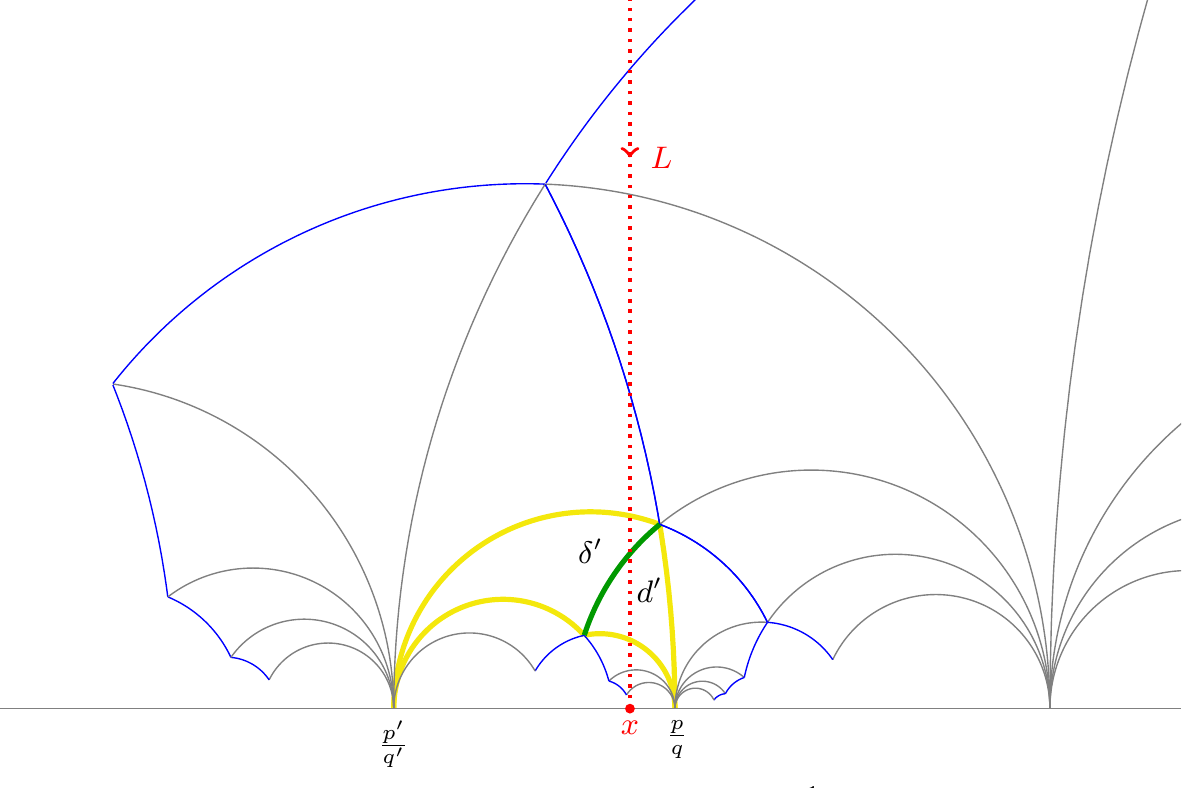}
\end{center}
\caption{Traveling down from infinity along the line $\Re(\omega)=x$.}  
\label{fig:cusp}  
\end{figure}

To this end, we borrow on Humbert's approach (see \cite{Hu}) to Hermite's theory. 
Consider a point traveling down a vertical hyperbolic line of abscissa $x$, going from $\infty$ to $0$. In other words, these are the form $x+{i}/{t}$, with $t$ going from $0$ to $\infty$. The point successively traverses 
hyperbolic triangles $g\cdot D_0$ as illustrated in Figure~\ref{fig:cusp}, whose cusps (at $p/q\neq \infty$) are by definition (\cite[page 82]{Hu} , or \cite{jacobs})  \define{Hermite convergents} of $x$. These convergents satisfy
   \begin{displaymath}\left| x-\frac{p}{q}\right|\leq \frac{1}{\sqrt{3}\,q^2},\end{displaymath}
 and the two vertices lying on the basis of the hyperbolic triangles in question are on the (real plane) circle of equation 
\begin{equation}\label{eq:cercle}
   (x-p/q)^2+(y-r)^2=r^2,\qquad {\rm with}\qquad r=\frac{1}{\sqrt{3}\,q^2}.
   \end{equation}
Let $x=[a_0,a_a,a_2,\cdots]$ be the continued fraction expansion of $x$ (positive), with $a_i\in \N$. 
Recall that its (classical) \define{convergents} are the rational numbers
   \begin{displaymath}\frac{p_n}{q_n}=[a_0,a_1,\cdots, a_n]= 
a_0+\cfrac{1}{a_1+\cfrac{1}{a_2+\cfrac{1}{\ddots\ +\cfrac{1}{a_n}}}}.
\end{displaymath}
 for $n\in\N$. Successive Hermite convergents appear as a subsequence of the sequence of all classical convergents of $x$, see \cite[page 95]{Hu}. One property that characterizes some of the Hermite convergents of $x$ goes as follows. If  $a_{n+1}\geq 2$, then $[a_0,a_1,\cdots, a_n]$ is an Hermite convergent of $x$, see \cite[page 96]{Hu}.
Moreover, if ${p'}/{q'}$ and ${p}/{q}$ are two consecutive Hermite convergents, then $p'q-pq'=\pm 1$, see \cite[page 84]{Hu}. 
This is not an exhaustive set of property if one intends to characterized the $p_n/q_n$, and we refer to {\em loc. cit.} for the necessary details.
Just to illustrate, the first convergents of $\sqrt{7}-1$ are:
    \begin{displaymath}\frac{2}{1},\ \frac{3}{2},\ \frac{5}{3},\ {\frac{23}{14}},{\frac{28}{17}},\ {\frac{51}{31}},{\frac{79}{
48}},\ {\frac{367}{223}},\ {\frac{446}{271}},\ {\frac{813}{494}}
,\ \ldots\end{displaymath}
 whereas among these the only Hermite convergents are:
      \begin{displaymath}\frac{2}{1},\ \frac{5}{3},\ {\frac{28}{17}},\ {\frac{79}{48}},\ {\rm and}\ {\frac{446}{271}}.\end{displaymath}
%Summing up these considerations, we get the following.

\begin{figure}[ht]
\begin{center}
\begin{tikzpicture}[scale=.7]
  \node (spirale) at (0,0)
    {\includegraphics[width=100mm]{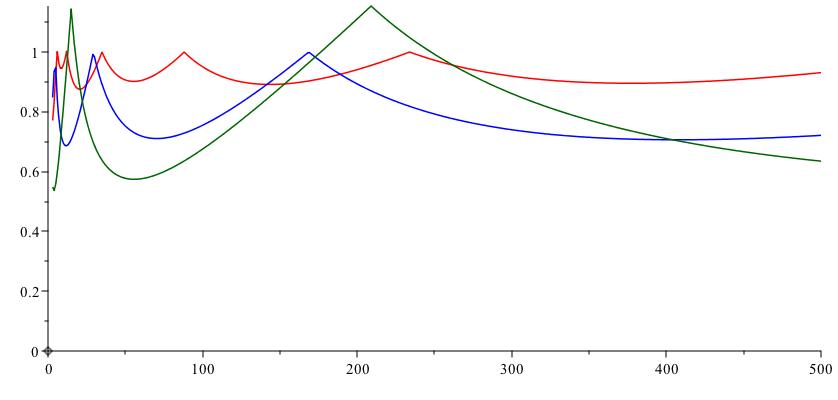}};
 \node[color=red,anchor=west] at (7,2.3) {\tiny $\ f_{\varphi}$};
 \node[color=blue,anchor=west] at (7,1.3) {\tiny $f_{\sqrt{2}-1}$};
 \node[color=green!50!black,anchor=west] at (7,.5) {\tiny $f_{\sqrt{3}-1}$};
 \node at(7.3,-2.8) {\tiny {$N$}};
 \end{tikzpicture}
\end{center}
\caption{Three growth capacity functions.}  
\label{graphe}
\end{figure}
Figure~\ref{graphe} illustrates that a local minimum occurs in each of the region associated to an Hermite convergent (in which $f_x(t)$ may be calculated using formula~\pref{eq:formule_pour_f}), with three different values of $x$.

We now want to establish that the golden ratio (and equivalent numbers) gives the  best growth scheme. 
As before, for a real $x$, let $x=[a_0,a_1,a_2,\ldots]$ be its continued fraction expansion. We assume that this expansion is infinite, which is to say that $x$ is irrational. Once again denote by $[a_0,\ldots,a_n]= {p_n}/{q_n}$ its $n$-th convergent, expressed as an irreducible fraction. It is well known \cite[(1.15) section 1.4]{A}  that
for all  $n\geq 1$, we have
    \begin{displaymath}\left|x-\frac{p_n}{q_n}\right|=\frac{1}{\lambda_n(x)\,q_n^2},\end{displaymath}
with $\lambda_n(x)=[a_n,\ldots,a_1]^{-1}+[a_{n+1},a_{n+2},\ldots]$.
Equivalently, $|q_n(q_n\,x-p_n)|={1}/{\lambda_n(x)}$. Moreover, the supremum of the  $\lambda_n(x)$, as $n$ goes to $\infty$, is precisely the \define{Lagrange number} of $x$ mentioned earlier, and it is denoted by $L(x)$, see \cite[(1.15), Proposition 1.22 and Definition 1.7]{A}. From Markoff's Theory, we know that for $x$ equal to the golden ratio, or any number whose continued fraction expansion contains only $1$ starting from some rank, then $L(x)=\sqrt{5}$; and that for any other number, $L(x)\geq \sqrt{8}$ ({\em loc.cit.}). From this we get the following, after proving an auxiliary lemma.

\begin{theorem}\label{thm:principal}
If $x$ is equal to the golden ratio, or any number whose continued fraction expansion contains only $1$ starting from some rank, then the supremum of the minima of its growth capacity function is ${2}/{\sqrt{5}}$. For any other number $x$, this limit is $\leq {2}/{\sqrt{8}}$.
\end{theorem}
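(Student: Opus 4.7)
The plan is to exploit Proposition~\ref{prop:formule}, which identifies the local minima of $f_x$ as the values $2|q_n(q_n x - p_n)|$, indexed by the Hermite convergents $p_n/q_n$ of $x$. The identity $|q_n(q_n x-p_n)| = 1/\lambda_n(x)$, recalled just before the theorem, rewrites these minima as $2/\lambda_n$, so the question becomes one about the asymptotic size of $\lambda_n$ along the Hermite-indexed subsequence, benchmarked against the Lagrange number $L(x) = \limsup_n \lambda_n(x)$.

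Next I would establish the auxiliary lemma announced in the preamble. Its role should be to translate ``supremum of the minima of $f_x$'' into the Markoff-theoretic quantity $2/L(x)$; equivalently, it must show that the Hermite-indexed subsequence of $\lambda_n$ correctly detects the Lagrange number. Humbert's description of Hermite convergents from~\cite{Hu} is the natural tool: the criterion ``$a_{n+1} \geq 2$ implies $n$ is Hermite'' already forces every index at which a large partial quotient appears---and those are precisely the indices driving $\limsup \lambda_n$---to lie in the Hermite subsequence. In the exceptional regime where the expansion of $x$ is eventually all $1$s, this criterion is vacuous; here one invokes the finer description in~\cite{Hu} to extract a Hermite subsequence along which $\lambda_n$ decreases monotonically to $\sqrt{5}$, so that the local minima $2/\lambda_n$ increase monotonically to the claimed supremum $2/\sqrt{5}$.

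With the lemma in hand, Markoff's theorem (as quoted just before the theorem) closes the proof: $L(x) = \sqrt{5}$ whenever $x$ is equivalent to the golden ratio, producing supremum of minima exactly equal to $2/\sqrt{5}$; and $L(x) \geq \sqrt{8}$ for every other irrational $x$, giving the upper bound $\leq 2/\sqrt{8}$.

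The main obstacle is the auxiliary lemma. The golden-ratio regime is particularly delicate, because the simple criterion $a_{n+1} \geq 2$ produces no Hermite convergents, so Humbert's finer framework must be unpacked to pin down the right subsequence and to verify that along it $\lambda_n$ tends to $\sqrt{5}$ from above---this is what upgrades an a priori inequality $\leq 2/\sqrt{5}$ to the stated equality. Symmetrically, for $x$ not equivalent to the golden ratio one must ensure the Hermite subsequence is rich enough that the Lagrange $\limsup$ is recognized, so that no stray local minimum escapes the inequality $\leq 2/\sqrt{8}$.
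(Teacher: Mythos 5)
Your overall architecture is the same as the paper's: Proposition~\ref{prop:formule} turns the local minima into the numbers $2|q_n(q_nx-p_n)|=2/\lambda_n(x)$ indexed by Hermite convergents, an auxiliary lemma reduces the asymptotics of this Hermite-indexed subsequence to the Lagrange number, and Markoff's dichotomy $L(x)=\sqrt{5}$ versus $L(x)\geq\sqrt{8}$ finishes the argument. The final step and the reduction via Proposition~\ref{prop:formule} are fine.

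The gap is in your proof of the auxiliary lemma, which is the only substantive step. You propose to rely on the sufficient criterion ``$a_{n+1}\geq 2$ implies $p_n/q_n$ is a Hermite convergent'' and then to argue that the indices with large partial quotients are the ones driving $\limsup_n\lambda_n(x)$. That implication is not established and is not obvious: since $\lambda_n(x)=[a_n,\dots,a_1]^{-1}+[a_{n+1},a_{n+2},\dots]$, an index with $a_{n+1}=1$ can still carry a value of $\lambda_n$ close to $3$ when it is flanked by suitable patterns, so ruling out that such indices realize the $\limsup$ would require a case analysis through the Markoff spectrum --- essentially assuming what you want to prove. Your fallback for the eventually-all-ones case is also only an assertion, and the specific claim that $\lambda_n$ ``decreases monotonically to $\sqrt{5}$'' is false: for $x=\varphi$ one has $\lambda_n=F_{n-1}/F_n+\varphi$, which oscillates around $\sqrt{5}$ rather than decreasing to it. The paper's lemma avoids all of this with a single quantitative input from Humbert: $p/q$ is a Hermite convergent if and only if $u=|q(qx-p)|<q(q+2q')/\bigl(2(q^2+qq'+q'^2)\bigr)$, and since $q'<q$ the right-hand side exceeds $1/2$; hence any convergent that is \emph{not} Hermite has $u>1/2$, i.e.\ $\lambda_n(x)<2$. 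Because $L(x)\geq\sqrt{5}>2$ for every irrational $x$ (Hurwitz), discarding the non-Hermite indices cannot change the limiting value, uniformly in $x$ and with no separate treatment of the golden-ratio case. You should replace your heuristic about large partial quotients with this inequality, or supply an actual proof that the Hermite subsequence captures the Lagrange number.
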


For a given $x$, let us denote by $H(x)$ the subset of integers $n$ such that  ${p_n}/{q_n}$ is an Hermite convergents for $x$. For instance, for $x=\sqrt{7}-1$, we have
     \begin{displaymath}H(x)=\{ 0,2,4,6,8,\ldots \}.\end{displaymath}

\begin{lemma}
The supremum as $n$ goes to infinity of the sequence of all $\lambda_n(x)$, for $n\in N$, is equal to the supremum of the subsequence $(\lambda_n(x))_{n\in H(x)}$.
\end{lemma}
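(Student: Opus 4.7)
The plan is to establish the inequality $\sup_n \lambda_n(x) \leq \sup_{n\in H(x)} \lambda_n(x)$; the reverse is immediate from $H(x) \subseteq \N$.

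First I derive a useful upper bound. The formula $\lambda_n(x) = [a_n,\ldots,a_1]^{-1} + [a_{n+1}, a_{n+2}, \ldots]$ recalled in the text, combined with $[a_n,\ldots,a_1]^{-1} < 1$ (its denominator exceeds $a_n \ge 1$) and $[a_{n+1}, a_{n+2}, \ldots] = a_{n+1} + 1/[a_{n+2}, a_{n+3}, \ldots] < a_{n+1} + 1$ (the tail of an irrational's CF strictly exceeds $1$), yields
\[
\lambda_n(x) < a_{n+1} + 2.
\]
The contrapositive of the sufficient condition quoted in the text states $n \notin H(x) \Rightarrow a_{n+1} = 1$, and combined with the bound this gives the crucial implication $n \notin H(x) \Rightarrow \lambda_n(x) < 3$.

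This immediately settles the regime $\sup_n \lambda_n(x) > 3$: any index $n$ with $\lambda_n(x) > 3$ must lie in $H(x)$, so no non-Hermite index contributes to the supremum, and $\sup_{n \in H(x)} \lambda_n(x) = \sup_n \lambda_n(x)$.

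The main obstacle is the remaining regime $\sup_n \lambda_n(x) \le 3$. Here all partial quotients $a_{n+1}$ are eventually at most $2$, so the continued fraction of $x$ lies in $\{1,2\}^{\N}$ from some rank on; by Markoff's classification of the Lagrange spectrum below $3$, the supremum equals $\sqrt{9 - 4/m^2}$ for some Markoff integer $m$, and $x$ is equivalent to a specific Markoff word. To close this case I would invoke the finer combinatorial characterization of Hermite convergents (going beyond the sufficient condition recalled in the text), as developed in Humbert's original account and revisited in \cite{Reutenauer}: the positions in the Markoff word at which $\lambda_n(x)$ approaches the Lagrange number turn out to be Hermite indices. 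Establishing this correspondence is the main technical obstacle, requiring a deeper analysis of $H(x)$ than the sufficient condition alone provides.
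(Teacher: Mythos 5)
Your first regime is handled correctly, but the second regime is a genuine gap, and it is exactly the regime that matters: by Hurwitz's theorem $\sup_n\lambda_n(x)\geq\sqrt{5}\approx 2.236$ for every irrational $x$, but for the golden ratio and its equivalents this supremum equals $\sqrt{5}$, and on the whole Markoff part of the spectrum it stays below $3$. Your bound $\lambda_n(x)<3$ for $n\notin H(x)$, obtained from the qualitative criterion ``$a_{n+1}\geq 2$ implies $n\in H(x)$'', is therefore too weak: it does not exclude a non-Hermite index attaining a value strictly between the Hermite supremum and $3$. What you offer for this case is not an argument but a plan, and the plan essentially restates the conclusion to be proved (that the near-extremal indices are Hermite indices); carrying it out through the Markoff classification would be a long detour, and you give no reason it must succeed.

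The paper closes this gap with a sharper, quantitative criterion taken from Humbert: $p/q$ is a Hermite convergent of $x$ if and only if $u=|q(qx-p)|<\frac{q(q+2q')}{2(q^2+qq'+q'^2)}$ for a suitable integer $0\leq q'<q$, and since $q'<q$ this threshold always exceeds $\frac{1}{2}$. Hence any convergent $p_n/q_n$ that is \emph{not} a Hermite convergent satisfies $1/\lambda_n(x)=u>\frac{1}{2}$, i.e.\ $\lambda_n(x)<2$. Because $2<\sqrt{5}\leq\sup_n\lambda_n(x)$, non-Hermite indices can never contribute to the supremum, and both of your regimes are handled at once. If you replace your bound $3$ by this bound $2$, your argument goes through with no case split at all.
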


\begin{proof}
Let ${p}/{q}$ be an irreducible fraction, with $q>0$. Let us set $u=\varepsilon\,q\,(p-q\,x)$ where $\varepsilon=\pm 1$ is chosen so that $u$ is positive. Consider $q'$ the unique integer solution of $p\,q'\equiv \varepsilon\ \mathrm{mod}\ q$ with $0\leq q'<q$. Let $p'$ be such that $p\,q'=\varepsilon +q\,p'$. Then ${p}/{q}$ is an Hermite convergent for $x$ if and only if    
           \begin{displaymath}u<\frac{q(q+2q')}{2(q^2+qq'+q'^2)},\end{displaymath}
see \cite[page 95]{Hu}. Observe that, since $q>q'$, we have 
   \begin{displaymath}\frac{q(q+2q')}{2(q^2+q\,q'+q'^2)}>\frac{q(q+2q')}{2(q^2+q\,q'+q\,q')}=\frac{1}{2}.\end{displaymath} 
It follows that a convergent ${p_n}/{q_n}$ which is not an Hermite convergent, must be such that $|q_n(q_n\,x-p_n)|=u>{1}/{2}$, and hence $\lambda_n(x)<2$. Since the supremum of $\lambda_n(x)$ is greater or equal to $\sqrt{5}/2$, it follows that this limit does not change if we restrict $n$ to be such that $ {p_n}/{q_n}$ is an Hermite convergent, that is $n\in H(x)$.
\end{proof}

We can now prove the theorem as follows.

\begin{proof}[\bf Proof of Theorem~\ref{thm:principal}]
Proposition~\ref{prop:formule} says that the minima are of the form $2|q(q\,x-p)|$, where ${p}/{q}$ is an Hermite convergent for $x$. This Hermite convergent occurs as one of the convergents of the continued fraction of $x$, say ${p}/{q}={p_n}/{q_n}$. By the above formula, this minimum is of the form ${2}/{\lambda_n(x)}$, where $n$ is the rank of an Hermite convergent, i.e.: $n\in H(x)$. By the lemma, the supremum of these numbers is ${2}/{L(x)}$, and the corollary follows.
\end{proof}

\section{Further considerations}
As we have seen, in instances where growth capacity could be considered to be a good measure from the point of view of phyllotaxis, it gives a clear mathematical indication why one should so often encounter the golden ratio. The theory considered here also suggests that if other growth schemes could occur in exceptional (or extraterrestrial!) instances, then the next most frequent such growth schemes would be tied to the number $1+\sqrt{2}$ (and equivalents); with variants of the Pell numbers, $P_n$,  
   \begin{displaymath}1, 2, 5, 12, 29, 70, 169, 408, 985, 2378, 5741, 13860, 33461, 80782, \ldots \end{displaymath}
replacing the Fibonacci numbers (and their own variants). After that would come, in rarer and rarer instances, growth schemes associated to the numbers
   \begin{displaymath}\frac{11+\sqrt{221}}{10},\ \frac{29+\sqrt{1517}}{26},\ \cdots\end{displaymath}
  For more on this from the point of view of Markoff Theory, see \cite[Section 10.2]{Reutenauer}.

Our explanation of the optimality of the golden ratio may be seen to be even more plausible if one considers the average 
\begin{equation}\label{eq:moyenne}
   g_x:=\limsup_{n\to\infty}  \overline{f}_x(n), \qquad \hbox{with}\qquad  \overline{f}_x(n):=\frac{1}{t_{n+1}-t_{n}}\int_{t_{n}}^{t_n+1} f_x^{(n)}(t)\,dt,
 \end{equation}
as a comparison tool between growth schemes. 
Rather than only whining from the point of view of a local behavior of minima,  $g_x$ gives a global measure that may be even more significant from the biological point of view. For the golden ratio $\varphi$, we observe that $2/\sqrt{5}\approx 0.89443\ (<g_\varphi)$ is   an upper bound for $g_x$, for all $x$ not equivalent to $\varphi$. More technically, it may be shown (see appendix)  that
\begin{eqnarray*}
    g_\varphi&=&\frac{1}{2}  +\frac{2}{\sqrt{5}}\log(\varphi)\\
                  &\approx& 0.93041,
\end{eqnarray*}
and that $ g_x< g_\varphi$ for all number $x$ not equivalent to $\varphi$. For instance,
\begin{eqnarray*}
    g_{(1+\sqrt{2})} &=& \frac{1}{2}  +\frac{1}{\sqrt{8}}\log(1+\sqrt{2})\\
                  &\approx& 0.81161.
\end{eqnarray*}   
%\section{Section title}
%\label{sec:1}
%Text with citations \cite{RefB} and \cite{RefJ}.
%\subsection{Subsection title}
%\label{sec:2}
%as required. Don't forget to give each section
%and subsection a unique label (see Sect.~\ref{sec:1}).
%\paragraph{Paragraph headings} Use paragraph headings as needed.
%\begin{equation}
%a^2+b^2=c^2
%\end{equation}

% For one-column wide figures use
%\begin{figure}
%% Use the relevant command to insert your figure file.
%% For example, with the graphicx package use
%  \includegraphics{example.eps}
%% figure caption is below the figure
%\caption{Please write your figure caption here}
%\label{fig:1}       % Give a unique label
%\end{figure}
%%
%% For two-column wide figures use
%\begin{figure*}
%% Use the relevant command to insert your figure file.
%% For example, with the graphicx package use
%  \includegraphics[width=0.75\textwidth]{example.eps}
%% figure caption is below the figure
%\caption{Please write your figure caption here}
%\label{fig:2}       % Give a unique label
%\end{figure*}
%%
%% For tables use
%\begin{table}
%% table caption is above the table
%\caption{Please write your table caption here}
%\label{tab:1}       % Give a unique label
%% For LaTeX tables use
%\begin{tabular}{lll}
%\hline\noalign{\smallskip}
%first & second & third  \\
%\noalign{\smallskip}\hline\noalign{\smallskip}
%number & number & number \\
%number & number & number \\
%\noalign{\smallskip}\hline
%\end{tabular}
%\end{table}

\section*{acknowledgements}
We would like to thank St\'ephane Durand (see \cite{Du}) and Christiane Rousseau (see \cite{RZ}) for drawing our attention to the fact that: it is because it is hard to approximate by rational numbers that the golden ratio plays a key role in phyllotaxis. We also thank 
 Nadia Lafreni\`ere and Caroline Series for interesting suggestions.

\section*{Appendix}
We calculate here the limit of the averaging integral~\pref{eq:moyenne} in the case of the golden ratio $x=\varphi$, whose Hermite's convergents are the quotients $F_{n+1}/F_{n}$. The required calculation is greatly simplified using the simplified expression
\begin{equation}\label{eq:fxn_golden}
    f_\varphi^{(n)}(t)=\varphi^{-2\,(n+1)}t+F_n^2\,t.
 \end{equation}
 for the function $f_\varphi^{(n)}$, which follows from the fact that
$(F_{n}-F_{n+1}\,\varphi)^2=\varphi^{-2\,(n+1)}$.
The corresponding minimum occurs at $F_n\,\varphi^{n+1}$,
and takes the value 
     $$2F_n/\varphi^{n+1}\approx 2/\sqrt{5}.$$
These assertions follows from the well known Binet formula
   \begin{displaymath}F_n=\frac{\varphi^{n+1}-(-1/\varphi)^{n+1}}{\sqrt{5}}. 
   \end{displaymath}
Exploiting that, $0\leq \left|{(-1/\varphi)^{n+1}}\right| \ll 1$, we also deduce from it the
very good approximation $F_n\approx{\varphi^{n+1}}/{\sqrt{5}}$. 
Thus $ f_\varphi^{(n)}(t)$
is very well approximated by $\varphi^{-2\,(n+1)}\,t+{\varphi^{2\,(n+1)}}/{(5\,t)}$ when $n$ is large enough.
We may also calculate that 
\begin{eqnarray*}
     t_n(\varphi) &=&\sqrt{\frac{F_n^2-F_{n-1}^2}{\varphi^{-2\,n}-\varphi^{-2\,(n+1)}}} \\
        &=&\varphi^{n}\,F_{n-1}\,\sqrt{((F_n/F_{n-1})^2-1)\,\varphi}\\
        &\approx& \varphi^{n+1}F_{n-1}\approx \varphi^{2\,(n+1)}/\sqrt{5},
   \end{eqnarray*}
from which we get
   \begin{displaymath}\frac{t_{n+1}^2-t_n^2}{t_{n+1}-t_{n}}=t_{n+1}+t_{n}\approx \varphi^{n+1}(\varphi\,F_{n}+F_{n-1})=
      \varphi^{2\,(n+1)},\end{displaymath}
 as well as
    \begin{displaymath}\frac{\log(t_{n+1}/t_n)}{t_{n+1}-t_{n}}\approx \sqrt{5}\,\frac{\log(\varphi^{2\,n+3}/\varphi^{2\,n+1})}{\varphi^{2\,n+3}-\varphi^{2\,n+1}}=2\,\sqrt{5}\,\frac{ \log(\varphi)}{\varphi^{2\,(n+1)}}\end{displaymath}
Hence, applying formula~\pref{eq:moyenne}, we find that
 \begin{eqnarray*}
   g_\varphi&=&\limsup_{n\to\infty}\frac{1}{t_{n+1}-t_{n}}\int_{t_{n}}^{t_{n+1}} f_\varphi^{(n)}(t)\, dt   \\
      &=&\limsup_{n\to\infty}  \frac{1}{t_{n+1}-t_{n}}\left[ \varphi^{-2\,(n+1)}\,\frac{t^2}{2} +
             \frac{\varphi^{2\,(n+1)}}{5}\log(t) \right]_{t=t_{n}}^{t=t_{n+1}}  \\ 
      &=&\limsup_{n\to\infty}     \frac{\varphi^{-2\,(n+1)}}{2}\,\left(\frac{t_{n+1}^2-t_n^2}{t_{n+1}-t_{n}}\right) +
             \frac{\varphi^{2\,(n+1)}}{5}\left(\frac{\log(t_{n+1}/t_n)}{t_{n+1}-t_{n}}  \right)  \\ 
       &=&\frac{1}{2}  +\frac{2}{\sqrt{5}}\log(\varphi).
  \end{eqnarray*}
 In the case of $\psi:=1+\sqrt{2}$,  one replaces Fibonacci numbers by Pell numbers, $P_n$, and uses
   \begin{displaymath}f_\psi^{(n)}(t)= {P_n}/{t} +\psi^{-2\,n}\,t,\qquad P_n\approx{\varphi^n}/{\sqrt{8}},\qquad{\rm and}\qquad
      t_n\approx{\psi^{2\,n+1}}/{\sqrt{8}},\end{displaymath}
to show that  $g_\psi={1}/{2}  +\log(\psi)/{\sqrt{8}}$ with a very similar calculation.

% BibTeX users please use one of
%\bibliographystyle{spbasic}      % basic style, author-year citations
%\bibliographystyle{spmpsci}      % mathematics and physical sciences
%\bibliographystyle{spphys}       % APS-like style for physics
%\bibliography{}   % name your BibTeX data base

% Non-BibTeX users please use

\end{document}